\newtheorem{theorem}{Theorem}
\newtheorem{definition}{Definition}
\newtheorem{proposition}{Proposition}
\newcommand{\radius}{r}
\newcommand{\RR}{\mathbb{R}}
\newcommand{\NN}{\mathbb{N}}
\newcommand{\K}{\mathcal{K}}
\newcommand{\C}{\mathcal{C}}
\renewcommand{\L}{\mathcal{L}}
\newcommand{\qlb}{\mathrm{qlb}}
\newcommand{\ub}{\mathrm{ub}}
\newcommand{\lb}{\mathrm{lb}}
\newcommand{\Y}{\mathcal{Y}}
\newcommand{\qlbr}{\mathbf{qlb}}
\newcommand{\lbr}{\mathbf{lb}}
\newcommand{\xr}{\mathbf{x}}
\newcommand{\xbest}{x_{\mathrm{best}}}
\newcommand{\epsNewton}{\epsilon_{\mathrm{Newton}}}
\renewcommand{\int}{\mathrm{int}}
\newcommand{\lambdamin}{\lambda_{\mathrm{min}}}
\newcommand{\lambdamax}{\lambda_{\mathrm{max}}}
\newcommand{\threepartdef}[5]
{
	\left\{
	\begin{array}{ll}
		#1 & \mbox{if } #2 \\
		#3 & \mbox{if } #4 \\
		#5 & \mbox{otherwise}
	\end{array}
	\right.
}
\title{ Quasi Branch and Bound for Smooth Global Optimization}
\author{Nadav Dym}
\affil{Duke University}
\date{} % Activate to display a given date or no date (if empty),
\begin{document}

\maketitle
\begin{abstract}
Quasi branch and bound is a recently introduced generalization of branch and bound, where lower bounds are replaced by a relaxed notion of quasi-lower bounds, required to be lower bounds only for sub-cubes containing a minimizer. This paper is devoted to studying the possible benefits of this approach, for the problem of minimizing a smooth function over a cube. This is accomplished by suggesting two quasi branch and bound algorithms, qBnB(2) and qBnB(3), that compare favorably with alternative branch and bound algorithms.

The first algorithm we propose, qBnB(2), achieves second order convergence based  only on a bound on second derivatives, without requiring calculation of derivatives. As such, this algorithm is suitable for derivative free optimization, for which typical algorithms such as Lipschitz optimization only have first order convergence and so suffer from limited accuracy due to the clustering problem. Additionally, qBnB(2) is provably more efficient than the second order Lipschitz gradient algorithm which does require exact calculation of gradients.

The second algorithm we propose, qBnB(3), has third order convergence and finite termination. In contrast with BnB algorithms with similar guarantees who typically compute lower bounds via solving relatively time consuming convex optimization problems, calculation of qBnB(3) bounds only requires solving a small number of Newton iterations. Our experiments verify the potential of both these methods in comparison with state of the art branch and bound algorithms.
\end{abstract}
	%\abstract{TBA}
\section{Introduction}
We consider the problem of optimizing a smooth function over a $d$-dimensional cube. Our focus is on guaranteed global optimization of such functions, a task which is typically addressed using \emph{Branch and Bound (BnB)} algorithms.  BnB algorithms have  many applications in science,engineering, economics and other fields. Examples can be found in surveys such as \cite{audet2017derivative, horst2013handbook,hare2013survey}. There are also applications for low dimensional problems in computer vision \cite{campbell2017globally,yang2015go,hartley2009global} which seem to be less well known in the general global optimization community. BnB algorithms are especially suitable for low dimensional problems, having many local minima, and where accuracy is of essence. For high dimensional problems local optimization algorithms will typically be preferable as the worst case complexity of BnB algorithms is exponential in the dimension, a problem which seems unavoidable as optimization over a $d$-dimensional cube is NP hard \cite{kreinovich2005beyond}.

 The notion of \emph{quasi-branch and bound (qBnB)} algorithms was introduced in \cite{dym2019linearly},  in the context of the rigid alignment problem. After suggesting qBnB as a general principle, the authors suggested a  qBnB algorithm tailored for the structure of the rigid alignment problem, and demonstrated that it is considerably more efficient than competing BnB algorithms suggested for this problem (see Appendix~\ref{app:conditional_second_order} for more details). The goal of this paper is to develop the concept of qBnB further, in the more general context of optimization of a smooth function over a cube. %In particular, we compare the qBnB algorithm suggested in \cite{dym2019linearly}, which we name qBnB(2) in this paper, with state of the art BnB algorithms both theoretically and empirically, explain how this algorithm can be adapted to constrained optimization on a cube, and suggest a third order 
 
 % The function minimized for the rigid alignment problem considered in \cite{dym2019linearly} is not differentiable, and the  qBnB algorithm for this problem is carefully tailored for the special structure of this problem (see Appendix~\ref{app:conditional_second_order} for more details). In this paper we assume as many derivative of $f$ as necessary, which opens many more possibilities for both BnB and qBnB algorithms. 

\begin{wrapfigure}[9]{r}{0.4\columnwidth}
	%\hspace{-1 in}
	\vspace{-2 em}
	\centering
	\includegraphics[width=0.35\columnwidth]{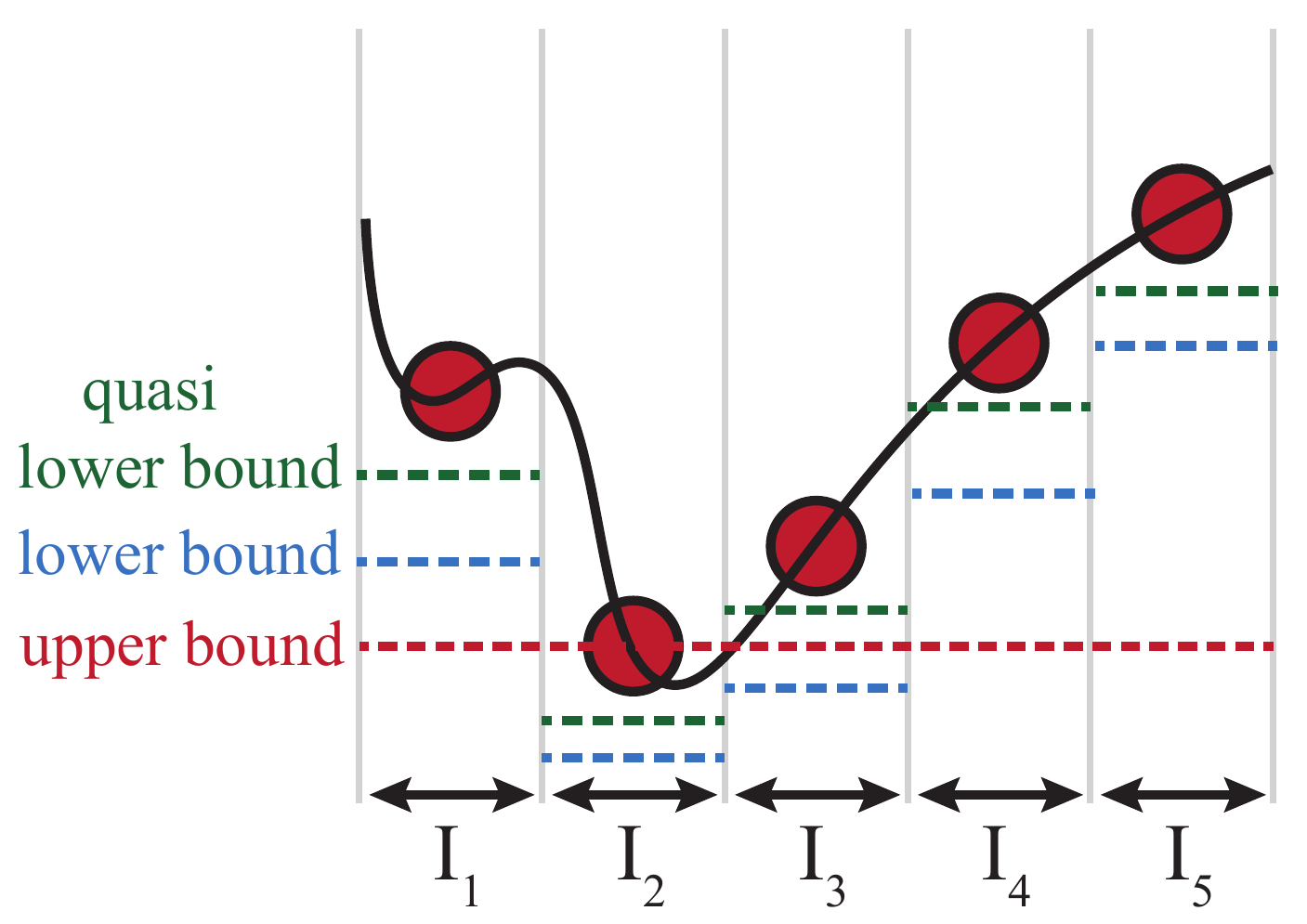}
	\vspace{-1 em}
	\caption{qBnB illustration}
	\label{fig:quasi}
\end{wrapfigure}
The basic idea for qBnB is illustrated in Figure~\ref{fig:quasi}. The figure illustrates optimization of a 1-dimensional function, whose domain is partitioned into five intervals $I_1,\ldots,I_5 $.  BnB algorithms compute a lower bound $\lbr(I_j)$ per interval, and compare it with an upper bound $\ub$ (denoted by a long red line) obtained from samplings of the function. Intervals $I_j$ satisfying
$\lbr(I_j)>\ub $
cannot contain a minimizer and so can be safely eliminated from the searching procedure. In Figure~\ref{fig:quasi} the intervals $I_1,I_4$ and $I_5$ would be eliminated by the BnB algorithm. 

Quasi-BnB  algorithms replace lower bounds with quasi-lower bounds. These are defined to be an assignment $\qlbr(I_j) $ of a scalar value per interval, which is required to be a lower bound for intervals $I_j$  \emph{containing a minimizer}, but may not be a lower bound for other intervals. For example, the green lines in Figure~\ref{fig:quasi}  are not lower bounds for the intervals $I_3$ and $I_4$, but are lower bounds for the interval $I_2$ which contains the minimizer of the function, and so they are valid quasi-lower bounds. 
Note that, as with  true lower bounds, intervals $I_j$ for which 
$
\qlbr(I_j)>\ub 
$
cannot contain a lower bound, and so can be safely removed from the searching procedure. In Figure~\ref{fig:quasi} this removal criterion  would eliminate all  intervals except for the interval $I_2$ which contains the minimizer.

Quasi-lower bounds are a generalization of lower bounds, as by definition lower bounds are also quasi-lower bounds. At first glance, it may not be clear how useful this generalization can be, as it is not generally possible to know whether a given partition element contains a minimizer or not. Nonetheless we find that the notion of quasi-lower bounds is useful, as it enables us to `assume by contradiction' that every cube  contains a minimizer, and so a point with vanishing gradient and positive semi-definite Hessian. These properties are useful for arriving at tighter bounds than a standard lower bound procedure would deliver.

The first algorithm we suggest in this paper, qBnB(2), was already mentioned in passing in the original qBnB paper \cite{dym2019linearly}. For unconstrained optimization problems, minimized in the interior of the cube, it utilizes the vanishing of the gradient at minimizers to propose a second order algorithm which does not require computation of derivatives, but only  the ability to evaluate the function and bound second derivatives. As such, qBnB(2) is a promising alternative to the first order Lipschitz algorithm for derivative free optimization (\cite{audet2017derivative,larson2019derivative,conn2009introduction}), where the functions minimized are smooth but their derivatives are not accessible. We also show qBnB(2) can be competitive when derivatives are available, by showing both theoretically and empirically that it is more accurate than the second order  "Lipschitz gradient" BnB algorithm \cite{kvasov2012lipschitz,kvasov2009univariate}. Finally, we  show how to modify qBnB(2) for global constrained minimization on a cube, naming the resulting algorithm constrained-qBnB(2). This modified algorithm also does not require calculation of derivatives, and for unconstrained problems its timing is comparable to unconstrained qBnB(2).

The second algorithm we present in this paper utilizes the positive-definiteness of the Hessian at a minimizer to obtain a third order algorithm we name qBnB(3). For non-degenerate problems, this algorithm enjoys the finite termination property- it can find the global minimum exactly in a finite number of iterations. In comparison with the popular $\alpha$BB algorithm \cite{adjiman1998global,adjiman1998global,androulakis1995alphabb} which also enjoys the finite termination property, the major advantage of qBnB(3) is that it only employs a small number of Newton iterations to compute the quasi lower bound, while $\alpha$BB computes lower bounds by the more time consuming process of optimizing general box-constrained convex programs. 

In practice we find that the bounding procedure in qBnB(3) is more accurate than qBnB(2) for small cubes, but is often less accurate for large cubes. This motivates a combined algorithm which uses a well informed principled criterion to choose a bounding procedure  based on cube size and problem parameters. Our experiments show this algorithm, which we name qBnB(2+3), outperforms all other qBnB and BnB algorithms described in this paper. 

\textbf{Paper organization}
In Section~\ref{sec:background} we fix problem notation, and review common BnB algorithms for continuous optimization on a cube. In Section~\ref{sec:qBnB} we introduce the idea of quasi-BnB algorithms formally. In Section~\ref{sec:second_order} we discuss the second order algorithms qBnB(2) and constrained-qBnB(2), and in  Section~\ref{sec:third_order} we discuss the third order algorithms qBnB(3). Experimental results are described in Section~\ref{sec:results}. 

%\begin{figure}[t]

%\end{figure}

\section{Background}\label{sec:background}
\subsection{Problem Definition and Notation}

A cube in $\RR^d$ is a set $\C$ of the form
$$\C=\{x \in \RR^d| \, |x_i-a_i|\leq h_i, \ \forall i, i=1,\ldots ,d \} .$$
We call $a$  the center of the cube and denote it by $x_\C$. We call $h$ the half-edge length of the cube, and $\|h\|_2$ the radius of the cube. We denote the radius of the cube $\C$ by $r(\C)$.  
 
Let $\C_0$ be a cube in $\RR^d$ and $f:\C_0 \to \RR $ a continuous function. In this paper we consider the problem of globally minimizing
\begin{equation}\label{eq:problem}
\min_{x \in \C_0} f(x)
\end{equation} 
We will denote the minimum of \eqref{eq:problem} by $f_*$ and we will typically use $x_*$ to denote a minimizer. For $\epsilon>0$ we say that $x \in \C_0$ is an $\epsilon$-optimal solution of \eqref{eq:problem} if $f(x)-f_*<\epsilon $.  We denote the collection of all sub-cubes of $\C_0$ by $\K$. For $\C \in \K$ we denote the minimum of $f$ on $\C$ by $f_*(\C)$.

We say that $(f,\C_0)$ is an \emph{unconstrained} optimization problem if there exists an open set $U_0$ containing $\C_0$ such that 
\begin{equation}\label{eq:unconstrained}
\inf_{x\in U_0} f(x)=\min_{x \in \C_0} f(x).
\end{equation}

We say that $f \in C^k(\C_0) $ if $f$ has $k$ derivatives in an open set containing $\C_0$. We denote the Hessian and gradient (if they exist) of $f$  at a point $x$ by $g(x)$ and $H(x)$. We denote the minimal and maximal eigenvalue of $H(x)$ by $\lambdamin(x) $ and $\lambdamax(x)$. For $B \subseteq \C_0$ we say that the gradient of $f \in C^1(\C_0)$ is Lipschitz in $B$  with Lipschitz constant $L_2\geq 0$ if the function $x\in B \mapsto g(x)$ is $L_2$ Lipschitz, and for $f \in C^2(\C_0) $ we say that the hessian is Lipschitz in $B$ with Lipschitz constant $L_3$ if the function $x\in B \mapsto H(x)$ is Lipschitz, where the norm on $H(x)$ is taken to be the operator norm. 

We say that $(f,\C_0)$ is \emph{non-degenerate}, if it is unconstrained, $f$ is in $C^2(\C_0) $, there are a finite number of minimizers, and the Hessian at each minimizer is strictly positive definite.  
\subsection{Branch and bound algorithms}\label{subsec:BnB}
%Branch and bound algorithms are able to guarantee $\epsilon$-optimal solution for problems of the form \eqref{eq:problem} (usually  assuming additional regularity conditions on $f$). There major disadvantage in comparison with local optimization algorithms is that there complexity is often exponential in the dimension of the problem. However BnB algorithms are useful for applications where many local minima are present, the dimension is not very high, and/or accuracy is of essence. \nd{examples}   

Branch and bound (BnB) algorithms are able to guarantee $\epsilon$-optimal solutions for problems of the form \eqref{eq:problem}, by using a coarse to fine search procedure.% The algorithm begin with an initial list $\List $ of cubes $\C \in \K $ which forms a partition of $\C_0$. The algorithm visits the cubes in the list in some predetermined order. For each visited cube $\C \in \List $, the algorithm samples a point $x$ from the cube, and computes a lower bound for the value of $f$ on the cube which we denote by $\lb(\C)$. The algorithm then (a) checks whether $f(x)$ is lower than the best value of $f$ known up to now, which we denote by $\ub$.  If so $\ub$ is updated to $f(x)$. (b) Attempts to prove that the cube cannot contain a global minimizer, by checking whether $\lb(\C)>\ub $. If this is true, the cube is eliminated from the list. Otherwise the cubes is partitioned into a number of smaller cubes which are then added to the list. This procedure never discards cubes which contain minimizers, and is guaranteed to converge to an $\epsilon$ optimal solution in finite time, providing that the error of the bounding procedure goes to zero as the diameter of the cubes goes to zero (see also Theorem~\ref{thm:convergence} in Appendix~\ref{app:convergence}.   
The search procedure aims at eliminating cubes which do not contain minimizers using lower bound and sampling rules, which are defined as follows:   

 %These algorithm  are based on the ability to compute lower bounds and upper bounds for the value of $f$ on a cube $\C \in \K$. Given a lower bound $\lb(\C)$ for $f_*(\C) $, and an upper bound $ \ub$ for $f_*$, if $\ub<\lb(\C) $ we have a certificate that $f$ is not minimized in $\C$ and so this cube can be eliminated from the search space. We will consider upper bounds for cubes which are obtained by sampling a point $x$ from $\C$ and evaluating $f$ at that point. 

 \begin{definition}[lower bound and sampling rules]\label{def:lbr}
	We say that $\xr: \K \to \C_0 $ is a sampling rule, if
	$$\xr(\C)\in \C, \;  \forall \C \in \K $$
	We say that $\lb: \K \to [-\infty,\infty) $ is a lower  bound rule for minimizing $f$ over $\C_0$, if
	$$\lbr(\C) \leq f_*(\C), \forall \C \in \K $$
\end{definition}

In this section we describe several popular BnB algorithms for minimizing \eqref{eq:problem}, and discuss their relative advantages and disadvantages. We focus on methods for computing lower bound and sampling rules, and not on other algorithmic aspects such as the order in which the cubes are searched and refined. 

We begin our discussion with the classical Lipschitz optimization \cite{ahmed2020combining,li20073d,jones1993lipschitzian} algorithm. This algorithm assumes that $f$ is Lipschitz continuous on $\C_0 $ and a (not necessarily optimal) Lipschitz constant $L_1$ is known. In this case the sampling and lower bounds rules are selected to be
$$\xr(\C)=x_\C \text{ and } \lbr(\C)=f(x_\C)-L_1r  ,$$
where $r$ is the radius of $\C$ and $x_\C$ is the center of $\C$.  A significant disadvantage of Lipschitz optimization is that it can be very computational expensive to guarantee a high quality solution. This problem is caused by failure to rule out solutions which are close to optimal solutions, so that the search space of the Lipschitz algorithm in late stages of the algorithm typically contains a large cluster of sub-cubes around the solution which are never eliminated. Analysis \cite{wechsung2014cluster,neumaier2004complete,du1994cluster} of the \emph{clustering problem}, as it is known  in the global optimization literature , revealed that it is strongly related to the convergence order of the algorithm, which is defined as follows:
 \begin{definition}[convergence order]
 	Let $\xr$ and $\lbr$ be sampling and quasi lower bound rules for minimizing $f$ over $C_0 $. We say that $(\xr,\lbr) $ have convergence  order $\alpha$ for some $\alpha>0$, if there exists some $c>0$ such that for all cubes $\C\in \K$ with radius $r$,
 	\begin{equation}\label{eq:order}
 	f(\xr(\C))-\lbr(\C)\leq c\radius^{\alpha} .
 		\end{equation}
 \end{definition}

%  than provided in our general problem setting   Generally speaking, all BnB algorithms are guaranteed to obtain an $\epsilon$-optimal solution when they terminate. Thus the main difference in performance between the algorithms is there respective time complexity. The time complexity of a BnB algorithm is controlled by the number of sub-cubes it needs to visit in order to guaranteed $\epsilon$-optimality, and by the computational complexity of sampling and computing lower bounds for each sub-cube. There is often a trade-off between these two components: Simple algorithms like the Lipschitz algorithm descir 
The Lipschitz algorithm has convergence order 1. In general, algorithms with convergence order $1 $ will encounter the clustering problem, in the sense that the complexity of obtaining an $\epsilon$ optimal solution will be polynomial in $1/\epsilon$. Algorithms with convergence order $2$ will generally avoid the clustering problem, in the sense that the complexity of obtaining an $\epsilon$ optimal solution will be proportional to a constant multiplied by $\log(1/\epsilon)$. However, when the problem is badly conditioned this constant can be very large. The asymptotic complexity of algorithms with convergence order $3$ is independent of the conditioning of the problem.
 
To achieve convergence order $2$ it is  typically necessary to assume that $f$ has a Lipschitz continuous gradient with Lipschitz constant $L_2$. For example an algorithm we will call the " Lipschitz gradient" algorithm \cite{kvasov2012lipschitz,kvasov2009univariate}  uses the observation that for a cube $\C \in \K $ with center $x_\C$ and radius $\radius$,
\begin{equation}\label{eq:lip_grad}
f(x)\geq f(x_\C)+g^T(x_\C)(x-x_\C)-\frac{L_2}{2} \radius^2, \quad \forall x \in \C\end{equation}
Accordingly $\lbr(\C)$ is defined as the minimum of the linear function which bounds $f$ from below,
\begin{equation}\label{eq:lip_grad_rule}
\lbr(\C)=\min_{x \in \C}  f(x_\C)+g^T(x_\C)(x-x_\C)-\frac{L_2}{2} \radius^2
\end{equation}
 and $\xr(\C) $ is chosen to be a minimizer of this linear function. These rules are simple to compute since the minimizer of the linear function on a cube is determined simply from the signs of the gradient.   We note that when $f$ is twice differentiable $L_2$ can be obtained as a bound on the spectral norm of the Hessian $H(x), x\in \C $. In this case  it is also possible to obtain a lower bound by replacing $L_2$ in \eqref{eq:lip_grad} with a lower bound for the minimal eigenvalue of $H(x), x\in \C$, as suggested, e.g., in  \cite{evtushenko2013deterministic}. 

Among the most popular second order algorithms is the $\alpha$BB algorithm \cite{adjiman1998global,adjiman1998global,androulakis1995alphabb}. This algorithm uses the following lower bounding rule: for  a cube $\C \in \K $ with midpoint $x_\C$ and  half edge length $h$, let $x_u=x_\C+ h$ and $x_l=x_\C-h $.  Then for $\alpha\geq 0$
$$f(x)\geq \ell_{\alpha}(x) \equiv f(x_\C)+g^T(x_\C)(x-x_\C)+\alpha(x-x_u)^T(x-x_l) , \quad \forall x \in \C$$  
If $m$ is a lower bound for the minimal eigenvalue of $H(x), x \in \C $ and $\alpha=\max \{0,-m/2\} $, $\ell_\alpha$ is convex and so optimizing $\ell_\alpha$ over $\C$ is tractable. According $\lbr(\C)$ is chosen to be the minimum of this optimization problem, and $\xr(\C)$ is chosen to be a minimizer. We note that computing each lower bound for $\alpha$BB is slow in comparison with the simpler  Lipschitz gradient method, as it requires solving a general box constrained convex optimization problem, which will typically requires several function evaluations for every lower bound computation. On the other hand, the bounds computed by $\alpha$BB are generally tighter. In fact, for non-degenerate problems, the bounds computed by $\alpha$BB for cubes in the vicinity of global minimizers are often exact, since in these cubes $f$ is strictly convex and so when they are small enough typically $f= \ell_\alpha $. We call this phenomenon \emph{eventual exactness} This in turn leads to the \emph{finite termination property}: for non-degenerate problems $\alpha$BB is able to find the exact solution in a finite number of steps (assuming that the solution to the convex optimization subproblems is computed exactly).

Algorithms with convergence order 3 are less common, but can be achieved for $C^2(\C_0)$ functions with Lipschitz Hessian, using minimization of the second order Taylor approximation of the function corrected according to the Hessian Lipschitz constant to ensure that a lower bound is obtained. For details on such a method see \cite{cartis2015branching,fowkes2013branch}.  

\section{Quasi BnB}\label{sec:qBnB}
Our main focus in this paper is introducing quasi-lower bounds
\begin{definition}[quasi-lower bound]
	We say that $\qlbr: \K \to [-\infty,\infty] $ is a quasi-lower  bound rule for minimizing $f$ over $\C_0$, if
	$$\qlbr(\C) \leq f_*(\C), \text{ for all } \C \in \K \text{ such that } f_*(\C)=f_*. $$
\end{definition}
We note that lower bound rules (Definition~\ref{def:lbr}) are necessarily quasi-lower bound rules, while quasi-lower bound rules are not necessarily lower bound rules. Nonetheless, in the context of BnB algorithms quasi-lower bounds can replace lower bounds without affecting the correctness of the algorithm. Recall that lower bound rules $\lbr$ are used to prove that a subcube $\C \subseteq \C_0$ does not contain a global minimizer, via inequalities of the form $\lbr(\C)>\ub $, where $\ub$ is an upper bound for $f_*$. Our simple but central observation is that if a similar inequality $\qlbr(\C)>\ub $ holds for a quasi-lower bound rule $\qlbr$, then we  also have a certificate that $f$ is not minimized in $\C$. This is because if $f$ were minimized in $\C$ then by definition of a quasi-lower bound rule we would have 
$$\qlbr(\C)\leq f_*(\C)=f_* \leq \ub .$$

We use the term \emph{quasi branch and bound algorithm (qBnB)} for the algorithm obtained from a BnB algorithm by replacing lower bound rules with quasi-lower bound rules. Thus, like BnB algorithms, qBnB algorithms are determined by a sampling rule and a quasi-lower bound rule, together with a  strategy for search the evolving list of subcubes visited by the algorithm. In Appendix~\ref{app:convergence} we describe a simple breadth first search (BFS) strategy and prove that qBnB algorithms converge to an $\epsilon$ optimal solution after a finite number of iterations.

We define the notions of convergence order and eventual exactness for qBnB algorithms in analogy to the definitions for BnB algorithms in Subsection~\ref{subsec:BnB}:
\begin{definition}\label{def:conv_order}
	Let $\xr$ and $\qlbr$ be sampling and quasi lower bound rules for minimizing a continuous function $f$ over a cube $C_0 \subseteq \RR^d $. 
	\begin{enumerate}
		\item We say that $(\xr,\qlbr) $ have convergence  order $\alpha$ for some $\alpha>0$, if there exists some $c>0$ such that for all $\C \in \K$ with radius $r$,
\begin{equation}\label{eq:quasi_order}
f(\xr(\C))-\qlbr(\C)\leq c\radius^{\alpha}.
\end{equation}
\item We say that $(\xr,\qlbr) $ are \emph{eventually exact} if there exists $\delta>0$ such that for all $\C \in \K$ contained in a ball of radius $\delta$ around a global minimizer, 
$$f(\xr(\C))-\qlbr(\C)=0 $$
\end{enumerate}
\end{definition}
The following sections are devoted to second order and third order qBnB algorithms, and their possible advantages over contemporary BnB algorithms.
\section{Second order Quasi BnB algorithms}\label{sec:second_order}
In this Section we suggest  quasi-BnB algorithms with convergence order $2$. In Subsection~\ref{sub:second_order} we describe a simple algorithm qBnB(2) with convergence order 2 for unconstrained optimization on a cube, and show this algorithm is provably tighter than the Lipschitz gradient algorithm . In Subsection~\ref{sub:constrained} we explain how qBnB(2) can be extended to constrained optimization on the cube, obtaining an algorithm we name \emph{constrained-qBnB(2)}. Our experiments (see Table~\ref{tab:constrained} and Section~\ref{sec:results}) show that the runtime of  constrained-qBnB(2) and qBnB(2) for unconstrained problems is similar, and that it can be two to seven times faster than the Lipschitz gradient algorithm. 

\subsection{Second order quasi-BnB}\label{sub:second_order}
In this subsection we consider unconstrained optimization problems, and assume that $f$ is  a $C^1(\C_0) $ function with Lipschitz gradients, and we are given a (possibly non-optimal) Lipschitz gradient constant $L_2\geq 0$. To define a quasi-lower bound, note that if $\C  \in \K$ is a cube which contains a minimizer $x_* \in \C_0$, then 
\begin{equation}\label{eq:fun_bound}
f(x)\leq f(x_*)+g^T(x_*)(x-x_*)+\frac{L_2}{2} \|x-x_*\|^2, \quad \forall x \in \C
\end{equation}
If $f\in C^2(\C_0)$ we can also take $L_2$ to be an upper bound for the maximal eigenvalue of $H(x)$ for $x \in \C $. Since $(f,\C_0) $ is unconstrained,  $g(x_*)=0$, and by choosing $x$  in \eqref{eq:fun_bound} to be the center $x_\C$ of $\C$ and denoting the radius of $\C$ by  $\radius$, we obtain a lower bound for $f$ in $\C$ via
\begin{equation} \label{eq:quasi2}
f(x_*)\geq f(x_\C)-\frac{L_2}{2}\radius^2
\end{equation}
Based on this observation, we define the qBnB(2) algorithm by the  the sampling and quasi lower bound rules 
\begin{equation}\label{eq:qBnB2rule}
\xr(\C)=x_\C \text{ and } \qlbr(\C)=f(\xr(\C))-\frac{L_2}{2}\radius^2 . \end{equation}
By \eqref{eq:quasi2} we see that $\qlbr$ is indeed a valid quasi-lower bound rule. Furthermore qBnB(2) has convergence order $2$ since
\begin{equation}\label{eq:uncertain2}
f(\xr(\C))-\qlbr(\C)=\frac{L_2}{2}\radius^2  
\end{equation}

One attractive attribute of qBnB(2) is its simplicity. Like classical Lipschitz optimization, but unlike second order BnB approaches,  computing the quasi-lower bound in this case only entails a single function evaluation at the center of the cube. This can be an important advantage for \emph{derivative free optimization} problems (e.g. \cite{audet2017derivative,larson2019derivative,conn2009introduction}), where the function minimized is differentiable, but its derivatives are not accessible. For such functions we are not aware of other algorithms which can achieve second order convergence. The simplicity of qBnB(2) also enables us to use it  for more complicated problems with additional structure where standard  second order methods might be difficult to adapt. In fact, qBnB(2) was first introduced in \cite{dym2019linearly} as a stepping stone towards solving optimization problems that while non-differentiable, are `conditionally Lipschitz differentiable'. For such functions the qBnB(2) framework can be successfully adapted to obtain a second order qBnB algorithm, while devising classical BnB algorithms for these problems with convergence order $>1$ seems to be a challenging task in general, and indeed competing methods for these problems typically have first order convergence \cite{yang2015go,pfeuffer2012discrete,hartley2009global}. This example is discussed in Appendix~\ref{app:conditional_second_order}.   

Another attractive attribute of qBnB(2) is that its bounds are tighter than the Lipschitz gradient  BnB algorithm, while the computational effort for computing the bounds for qBnB(2) is slightly smaller. We record this simple fact in the following proposition
\begin{proposition}\label{prop:better}
Let $(f,\C_0) $ be an unconstrained optimization problem, let $\lbr$  be the lower bound rules for the Lipschitz gradient algorithm (see~\eqref{eq:lip_grad_rule}) and $\qlbr$ the quasi-lower bound rule for qBnB(2) (see~\eqref{eq:qBnB2rule}). Then for every $\C \subseteq \C_0 $,
$$\lbr(\C) \leq \qlbr(\C) $$
\end{proposition}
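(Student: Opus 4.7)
The plan is to directly compare the two formulas by isolating where they differ. Writing out the Lipschitz gradient lower bound from \eqref{eq:lip_grad_rule}, we have
\begin{equation*}
\lbr(\C) = \min_{x \in \C}\Bigl[f(x_\C) + g(x_\C)^T(x - x_\C) - \tfrac{L_2}{2}\radius^2\Bigr],
\end{equation*}
and the two constants $f(x_\C)$ and $-\tfrac{L_2}{2}\radius^2$ pull out of the minimization. The remaining quantity is $\min_{x \in \C} g(x_\C)^T(x - x_\C)$.

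First I would observe that the qBnB(2) bound is precisely the expression in brackets evaluated at the single point $x = x_\C$. Since $x_\C \in \C$, the minimum over $\C$ can only be less than or equal to this particular value. More explicitly, taking $x = x_\C$ makes the linear term $g(x_\C)^T(x - x_\C)$ vanish, so
\begin{equation*}
\min_{x \in \C} g(x_\C)^T(x - x_\C) \leq g(x_\C)^T(x_\C - x_\C) = 0.
\end{equation*}
Combining this with the pulled-out constants gives $\lbr(\C) \leq f(x_\C) - \tfrac{L_2}{2}\radius^2 = \qlbr(\C)$, which is the claim.

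I do not expect any substantive obstacle here: the proof is essentially a one-line observation that the minimum of an affine function over a set containing the linearization point cannot exceed the value at that point. Note that unconstrainedness of $(f,\C_0)$ is not needed for the inequality itself; it is only used to guarantee that $\qlbr$ defined by \eqref{eq:qBnB2rule} is a legitimate quasi-lower bound, which was established in the discussion leading to \eqref{eq:quasi2}. The comparison \emph{between} the two bounds is purely algebraic.
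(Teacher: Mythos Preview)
Your proof is correct and follows essentially the same approach as the paper: both arguments reduce to the observation that the minimum over $\C$ of the affine function $x\mapsto f(x_\C)+g(x_\C)^T(x-x_\C)-\tfrac{L_2}{2}r^2$ is at most its value at $x=x_\C$, which is exactly $\qlbr(\C)$. Your write-up is slightly more explicit in isolating the linear term, and your remark that unconstrainedness is not used for the inequality itself is accurate and worth keeping.
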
 
\begin{proof}
For every $\C \subseteq \C_0 $ with center $x_\C $ and radius $r$,
$$\lbr(\C)=\min_{x \in \C} f(x_\C)+g^T(x_\C)(x-x_\C)-\frac{L_2}{2} \radius^2\leq  f(x_\C)-\frac{L_2}{2} \radius^2=\qlbr(\C)  $$  
 \end{proof}

\subsection{Constrained quasi-BnB}\label{sub:constrained} 
A disadvantage of qBnB(2) in comparison with the Lipschitz gradient algorithm, is that the former is only valid for unconstrained problems over the cube, since it assumes the gradient at the minimizer vanishes. We now show that qBnB(2) can be adapted to the constrained scenario as well, leading to an algorithm we name constrained-qBnB(2). While constrained-qBnB(2)  is slightly less simple, it has the same attractive attributes as qBnB(2): It is a second order algorithm, and only requires a bound on the variation of the gradient, but does not need to compute the gradient itself in any step of the algorithm. We note that the method we suggest for dealing with box constraints below can probably be extended to general convex polyhedrons.   

In Constrained-qBnB(2) we assume $f\in \C^1(\C_0)$ with a Lipschitz gradient constant $L_2$, but do not assume as in qBnB(2) that $(f,\C_0) $ is unconstrained. Out strategy is to choose a single point $\xr(\C)$ for each cube, with the property that for any other point $x\in\C$, there exists some $\epsilon>0$ such that  
\begin{equation}\label{eq:rel_int}
tx+(1-t)\xr(\C) \in \C, \quad \forall t\in [-\epsilon,1] 
\end{equation} 
As a result any minimizer in $\C$ will be an unconstrained minimizer with respect to the line between $\xr(\C) $ and $x_*$, which will be enough for deriving a quasi-lower bound analogous to the one used in qBnB(2). We will soon explain this in detail, but we will first describe our method for choosing $\xr(\C)$.

\begin{wrapfigure}[4]{r}{0.3\columnwidth}
	%\hspace{-1 in}
	\vspace{-1.5 em}
	\centering
	\includegraphics[width=0.3\columnwidth]{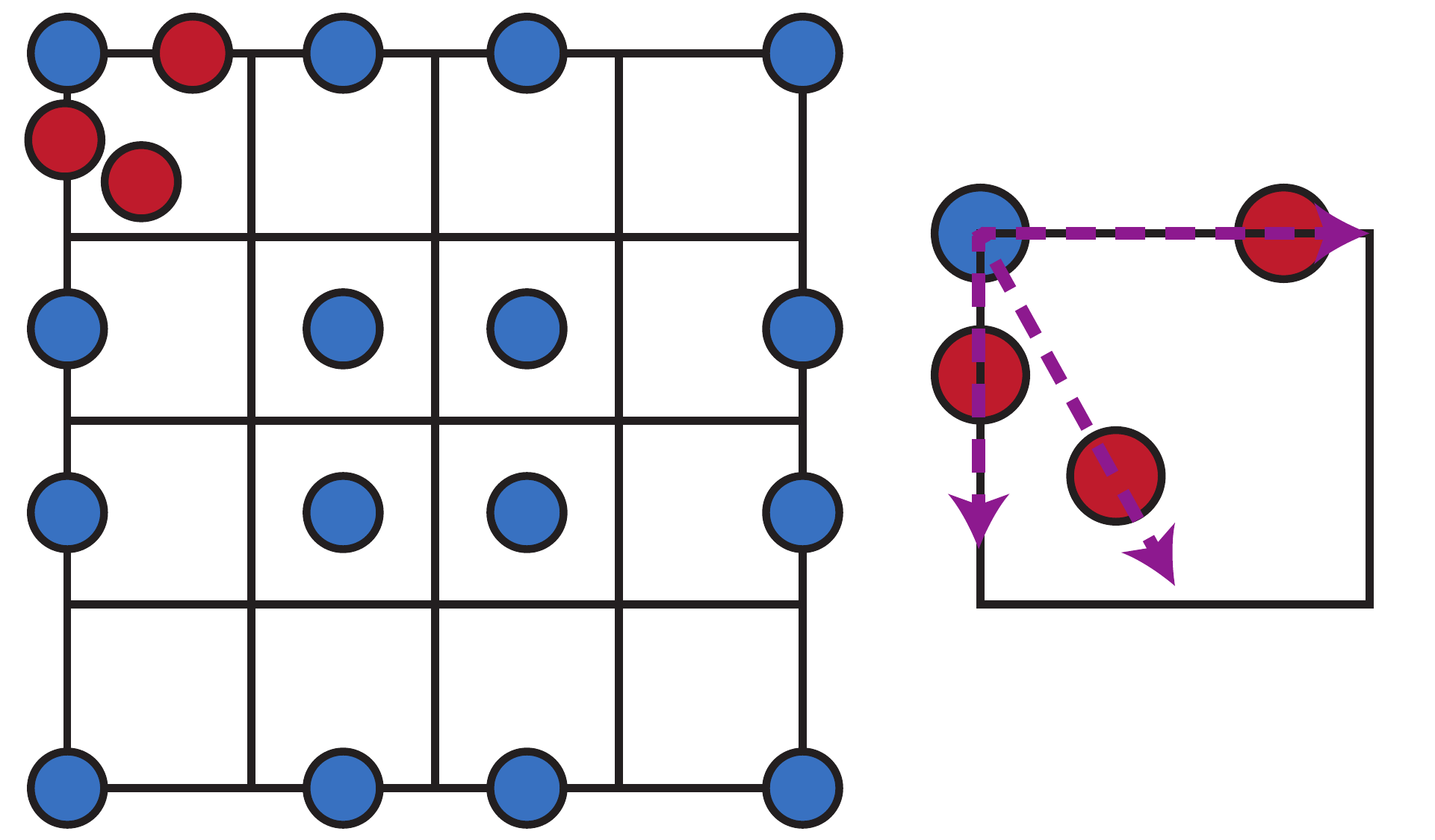}
	\vspace{-2 em}
	\caption{sampling scheme}
	\label{fig:constrained}
\end{wrapfigure}
We write $\C_0=\prod_{i=1}^d [a_i,b_i]$, and let $\C=\prod_{i=1}^d [c_i,d_i]$ be some subcube. We assume that $d_i-c_i<b_i-a_i, \forall i, i=1,\ldots d $. For large cubes which do not satisfy this condition we just set $\qlbr(\C)=-\infty, \xr(\C)=x_\C$. For cubes which do satisfy this condition we set  
\begin{equation}\label{eq:xr}
\xr_i(\C)=\threepartdef{c_i}{c_i=a_i}{d_i}{d_i=b_i}{\frac{c_i+d_i}{2}}
\end{equation}  
It can be verified that \eqref{eq:rel_int} is satisfied for every $x \in \C$ and a sufficiently small $\epsilon$. This sampling scheme is illustrated in Figure~\ref{fig:constrained}, where the blue dots stand for the points sampled in each cube $\C$, and the red points illustrate the fact that a line between the blue point and red point can be extended beyond the red point.

Now let $\C$ be a cube for which $\qlbr(\C)>-\infty $, and assume that $x_* \in \C $ is a minimizer, then the inner product of $g(x_*)$ with the vector $\xr(\C)-x_*$ vanishes, due to \eqref{eq:rel_int}, and so 
 \begin{align}\label{eq:constrained_derivation}
 f(\xr(\C)) &\leq f(x_*)+g^T(x_*)(\xr(\C)-x_*)+\frac{L_2}{2} \|\xr(\C)-x_*\|^2= f(x_*)+\frac{L_2}{2} \|\xr(\C)-x_*\|^2 \nonumber\\
 &\leq f(x_*)+\frac{L_2}{2} \max_{x \in \C}\|\xr(\C)-x\|^2\\
 &= f(x_*)+\frac{L_2}{2}\sum_{i=1}^d \max \{(\xr_i(\C)-c_i)^2, (\xr_i(\C)-d_i)^2 \} \nonumber
   \end{align}
   Accordingly we define constrained-qBnB(2) algorithm via the sampling rule \eqref{eq:xr} and the quasi-lower bound rule 
   \begin{equation*}
 \qlbr(\C)=f(\xr(\C))-\frac{L_2}{2}\sum_{i=1}^d \max \{(\xr_i(\C)-c_i)^2, (\xr_i(\C)-d_i)^2 \}.
   \end{equation*}
The derivation presented above shows that $\qlbr$ is a valid quasi-lower bound since $\qlbr(\C) $ is a lower bound when $\C$ contains a minimizer due to \eqref{eq:constrained_derivation}. Moreover the pair $(\xr,\qlbr) $ has convergence order $2$, and for $\C$ which do not intersect $\C_0$ the sampling and quasi-lower bound rules are in fact identical to those used in qBnB(2).

\section{An eventually exact third order qBnB algorithm }\label{sec:third_order}
In Section~\ref{sec:second_order} we discussed second order quasi-BnB algorithms. In this section we introduce a third order qBnB algorithm for unconstrained optimization over a cube, which we name qBnB(3). Our assumptions in this section are that $f\in C^2(\C_0)$, and $H(x)$ is Lipschitz in $\C_0$ with a known Lipschitz constant $L_3\geq 0$.  We further assume that minimizing $f$ over $\C_0$ is an unconstrained optimization problem. 

 Besides having third order convergence, qBnB(3) is also eventually exact (Definition~\ref{def:conv_order}), providing that  the unconstrained optimization problem is non-degenerate. In comparison with the $\alpha$BB algorithm discussed in Subsection~\ref{subsec:BnB}, which is eventually exact as well, the main advantage of qBnB(3) is that computing the quasi lower bound for each cube only requires solving strictly convex unconstrained optimization problems via Newton iterations in the regime of rapid quadratic convergence, and so requires a very small number of Newton iterations. In contrast, $\alpha$BB solves a box-constrained convex optimization problem in each cube. 
 
 The basic idea behind qBnB(3) is as follows: If $\C \subseteq \C_0$ is a cube which contains a minimizer $x_*$, then  $H(x_*)\succeq 0$. In this case, by adding a strictly convex quadratic regularizer of magnitude $O(r^3) $ (where $r$ is the radius of the cube) we obtain a new function $\hat f$ which is strictly convex in a neighborhood of $\C$, and for which we can prove that Newton iterations initialized from the center of $\C$ converge quadratically. Accordingly, for every cube (even those which do not contain a minimizer) we first verify that the Hessian of $f$ in the center of the cube is at least close to convex, and then run Newton iterations and check whether they achieve quadratic convergence. When quadratic convergence fails we obtain a certificate that $\C$ does not contain a minimizer, and when quadratic convergence occurs the limit point can be used to obtain third order sampling and quasi-lower bound rules.
 
 The following proposition is the first stage towards making these ideas precise.
\begin{proposition}\label{prop:reg}
Let $(f,\C_0) $ be an unconstrained optimization problem.  Let $U_0$ be an open set containing $\C_0$, such that \eqref{eq:unconstrained} holds, and  $f$ is in $C^2(U_0) $. Let $\C \in \K$ be a cube  with radius of length $r_0=r$ and center $x_0=x_\C$, and assume that $\bar B_{2r}(x_0) $ is contained in $U_0$. Finally assume  that $L_3\geq 0$ is a Hessian Lipschitz constant for $H(x)$ in  $\bar B_{2r}(x_0) $. Let $\hat f$ denote the auxiliary function
\begin{equation}\label{eq:hat_f}\hat f(x)=f(x)+\frac{\bar \lambda}{2}\|x-x_0\|^2 \text{ where } \bar \lambda=\max\{0,5L_3r-\lambdamin(x_\C)  \} .\end{equation}
Furthermore define
\begin{equation}\label{eq:rk}
m=3L_3r, r_0=r \text{ and } r_{k+1}=\frac{3L_3}{2m}r_k^2 , \quad \forall k\geq 0 \end{equation}
Let $x_k$ denote the Newton iterations for minimizing the auxiliary function,  initialized from $x_0$. If there exists a minimizer $x_*$ for $f$ in $\C$ then 
	\begin{align}
	\lambdamin(x_0)&\geq -L_3r \label{subeq:1}\\
	\|x_{k+1}-x_k\| &\leq r_k+r_{k+1} \text{ and } \|x_{k}-x_0\|\leq r_{k}+r_0 , \forall k\geq 0 . \label{subeq:2}
	\end{align}
Additionally $x_k$ converge to a minimizer $\hat x_* $ of $\hat f$, and $f_*$ is bounded by
	\begin{equation}\label{eq:L3bound}
	f(\hat x_*) \geq f_*\geq \hat f(\hat x_*)-\frac{\bar \lambda}{2}r^2
	\end{equation} 

	\end{proposition}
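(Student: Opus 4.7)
The plan is to argue in three stages, corresponding to the three conclusions of the proposition. First, I will establish \eqref{subeq:1}: since $(f,\C_0)$ is unconstrained and $x_*\in \C\subseteq U_0$ is a minimizer, $H(x_*)\succeq 0$, and combining $\|x_0-x_*\|\leq r$ with Hessian-Lipschitzness and Weyl's perturbation inequality yields $\lambdamin(x_0)\geq \lambdamin(x_*)-L_3 r\geq -L_3 r$. Second, I will show that $\hat f$ is uniformly $m$-strongly convex on $\bar B_{2r}(x_0)$. By construction $\lambdamin(\nabla^2\hat f(x_0))=\lambdamin(x_0)+\bar\lambda\geq 5L_3 r$ in both branches of the definition of $\bar\lambda$, and a second application of Hessian-Lipschitzness then gives
\[
\lambdamin(\nabla^2\hat f(x))\geq 5L_3 r - L_3\|x-x_0\|\geq 3L_3 r = m,\qquad\forall\, x\in \bar B_{2r}(x_0).
\]

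Third, I will locate $\hat x_*$ and run a classical Newton analysis. Taking $\hat x_*$ to be a minimizer of $\hat f$ on the compact set $\bar B_{2r}(x_0)$, the chain $\hat f(\hat x_*)\leq \hat f(x_*)= f_*+\frac{\bar\lambda}{2}\|x_*-x_0\|^2\leq f_*+\frac{\bar\lambda}{2}r^2$ together with $f(\hat x_*)\geq f_*$ (valid because the problem is unconstrained and $\hat x_*\in U_0$) forces $\|\hat x_*-x_0\|\leq r$. Hence $\hat x_*$ lies in the interior of $\bar B_{2r}(x_0)$, satisfies $\nabla\hat f(\hat x_*)=0$, and is the unique critical point of $\hat f$ in the ball by strong convexity. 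I will then prove by induction on $k$ that $\|x_k-\hat x_*\|\leq r_k$ and $x_k\in \bar B_{2r}(x_0)$: the base case $k=0$ is what was just shown, and in the inductive step the inclusion $x_k\in\bar B_{2r}(x_0)$ gives $\lambdamin(\nabla^2\hat f(x_k))\geq m$, after which the standard quadratic Newton estimate (from Taylor-expanding $\nabla\hat f$ around $x_k$ and applying the Hessian-Lipschitz bound) yields $\|x_{k+1}-\hat x_*\|\leq \frac{L_3}{2m}\|x_k-\hat x_*\|^2\leq \frac{3L_3}{2m}r_k^2 = r_{k+1}$. The two bounds in \eqref{subeq:2} then follow from the triangle inequality via $\|\hat x_*-x_0\|\leq r_0$, and $r_k\to 0$ guarantees convergence $x_k\to\hat x_*$. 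Finally, \eqref{eq:L3bound} is just the two-sided bound used to locate $\hat x_*$, namely $f_*\leq f(\hat x_*)$ and $\hat f(\hat x_*)\leq f_*+\frac{\bar\lambda}{2}r^2$.

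The main obstacle I anticipate is book-keeping: the argument juggles two distinct minimizers ($x_*$ of $f$ on $\C$, and $\hat x_*$ of $\hat f$ on the ball), and one must establish that $\hat x_*$ exists and sits close enough to $x_0$ that Newton iterates never escape $\bar B_{2r}(x_0)$, without ever having an explicit formula for $\hat x_*$. The constants in the proposition are calibrated precisely to this end: the value $5L_3 r$ inside $\bar\lambda$ is chosen so that $\hat f$ retains strong convexity $m=3L_3 r>0$ after the $2L_3 r$ erosion of $\lambdamin$ across $\bar B_{2r}(x_0)$, and the slightly loose factor $3L_3/(2m)$ in the recursion for $r_k$ (rather than the tight $L_3/(2m)$) provides the slack needed to close the induction from the base case $r_0=r$.
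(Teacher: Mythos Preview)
Your plan matches the paper's proof almost step for step: \eqref{subeq:1} from $H(x_*)\succeq 0$ plus Hessian--Lipschitz; strong convexity of $\hat f$ on $\bar B_{2r}(x_0)$ from the calibration of $\bar\lambda$; localization $\|\hat x_*-x_0\|\leq r$; then the quadratic Newton recursion and triangle inequality for \eqref{subeq:2}. Your Newton estimate $\|x_{k+1}-\hat x_*\|\leq \tfrac{L_3}{2m}\|x_k-\hat x_*\|^2$ is in fact the standard one; the paper obtains the looser constant $\tfrac{3L_3}{2m}$ by a different splitting of the error term, which is why that constant appears in the definition of $r_{k+1}$---no extra ``slack'' is actually needed for the induction.

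One small point to patch: your chain $\hat f(\hat x_*)\leq f_*+\tfrac{\bar\lambda}{2}r^2$ together with $f(\hat x_*)\geq f_*$ forces $\|\hat x_*-x_0\|\leq r$ only when $\bar\lambda>0$; when $\bar\lambda=0$ the inequality is vacuous and you only conclude $f(\hat x_*)=f_*$. This is easily repaired (since $\bar\lambda=0$ means $\hat f=f$ is strictly convex on $\bar B_{2r}(x_0)$ and $x_*\in\bar B_r(x_0)$ already attains the minimum, so $\hat x_*=x_*$), but the paper avoids the case split altogether by observing directly that $\hat f(x_*)\leq \hat f(x)$ whenever $\|x-x_0\|\geq r$, using $f(x_*)\leq f(x)$ and $\|x_*-x_0\|\leq r\leq \|x-x_0\|$.
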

\begin{proof} 
	Assume  $\C$ contains a minimizer $x_*$ for $f$. Then since $x_0-x_* \leq r $ and  $\lambdamin(x_*)\geq 0$ we obtain \eqref{subeq:1}. 
	
	We use $\hat H(x)$ and $\hat g(x)$ to denote the Hessian and gradient of the new function $\hat f$ at a point $x$. If  $x \in B_{2r}(x_0)$ then  $\hat H(x) \succeq m I_d $ where $m= 3L_3r $ because
	\begin{equation}\label{eq:psd}
	\hat H(x)=H(x)+\bar \lambda I_d\succeq H(x_0)+(\bar \lambda-2L_3r)I_d \succeq 3L_3rI_d+[H(x_0)-\lambdamin(x_0)I_d] \succeq 3L_3rI_d
	\end{equation}
		For any $x \in U_0$ such that $\|x-x_0\|\geq r $, since $x_* $ minimizes $f$ and $\|x_*-x_0\| \leq r$, necessary $\hat f(x_*)\leq \hat f(x) $, and thus $\hat f$ must be minimized in $\bar B_r(x_0) $. Since in \eqref{eq:psd} we saw that $\hat f$ is strictly convex in $B_{2r}(x_0) $ if follows that $\hat f  $ has a unique global minimizer in $\bar B_r(x_0) $.

 We now consider the process of  minimizing $\hat f$ using Newton iterations, initialized from $x_0$: 
\begin{equation} \label{eq:Newton}
x_{k+1}=x_k-\hat H(x_k)^{-1} \hat g(x_k) .
\end{equation} 
 The distance of $\hat x_* $ from $x_0$ is bounded by $r_0=r$. For all $k>0$ we can then iteratively bound the distance of $x_{k+1}$ from $\hat x_*$  by $r_{k+1}$ because 
\begin{align*}
\|x_{k+1}-\hat x_*\|\stackrel{\eqref{eq:Newton}}{=} & \|x_k-\hat x_*- \hat H(x_k)^{-1}\hat g(x_k) \|=\|\hat H(x_k)^{-1}\left[\hat H(x_k)\left(x_k-\hat x_*\right)- \hat g(x_k) \right] \|\\
&\stackrel{\eqref{eq:psd}}{\leq} \frac{1}{m} \|\hat H(x_k)\left(x_k-\hat x_*\right)- \hat g(x_k)\| \\
&\leq \frac{1}{m}\|\left(\hat H(x_k)-\hat H(\hat x_*)\right)\left(x_k-\hat x_*\right)\|+\frac{1}{m}\|\left(\hat g(\hat x_*)+\hat H(\hat x_*)(x_k-\hat x_*) \right) -\hat g(x_k) \|\\
&\leq  \frac{3L_3}{2m}\|x_k-\hat x_*\|^2\leq   \frac{3L_3}{2m}r_k^2 = r_{k+1}
\end{align*}
We now have \eqref{subeq:2} directly from the triangle inequality. 

We note that $r_k$ converges quadratically to zero, since the quantity $\frac{3L_3}{m}r_k $ is equal to $1$ for $k=0$, and decreases quadratically:
$$\frac{3L_3}{m}r_{k+1} = \frac{1}{2} (\frac{3L_3}{m}r_{k} )^2  .$$
It follows that $x_k$ converges to a point in $\bar B_r(x_0) $, and since $\hat f$ is strictly convex in $B_{2r}(x_0) $ this point must be $\hat x_*$. 
Now the left hand side of \eqref{eq:L3bound} follows immediately from the definition of $f_*$, and the right hand side follows from
$$\hat f(\hat x_*)\leq \hat f(x_*)=f(x_*)+\frac{\bar \lambda}{2}\|x_*-x_0\|^2 \leq f(x_*)+\frac{\bar \lambda}{2}r^2 .$$
\end{proof}

\paragraph{Algorithm}
Based on Proposition~\ref{prop:reg} we suggest the following qBnB algorithm, which we name qBnB(3): Assume we are  given a cube $\C \subseteq \C_0$ with center $x_0=x_\C$ and radius $r$.  Assume that $\bar B_{2r}(x_0) \subseteq U_0 $ where $U_0$ is an open set described in Proposition~\ref{prop:reg} (otherwise set $\qlbr(\C)=-\infty $), and we are given a Lipschitz  bound $L_3$ for the Hessian function $H(x), x \in B_{2r}(x_0) $. We define the quasi-lower bound and sampling rules for $\C$ as follows:
\begin{enumerate}
	\item Compute $H(x_0) $ and check that \eqref{subeq:1} holds. If it doesn't, we know that $\C$ does not  contain a minimizer and so we set $\qlbr(\C)=\infty $ and $\xr(\C)=x_0$.
	\item If  \eqref{subeq:1} does hold, we consider the Newton iterations $x_k$ for minimizing $\hat f$, as defined in \eqref{eq:Newton}. For each $x_k$ we check whether \eqref{subeq:2} holds, where $r_k$ are defined as in \eqref{eq:rk}. If for some $k$ the condition isn't satisfied we know $\C$ does not contain a minimizer and so we set $\qlbr(\C)=\infty $ and $\xr(\C)=x_0$. Otherwise \eqref{eq:hat_f} implies that $x_k$ is a Cauchy sequence, and so it has a limit which we denote by $\hat x_*$. We then  set 
	\begin{equation*}
	\xr(\C)=\hat x_* \text{ and } \qlbr(\C)=\hat f(\hat x_*)-\frac{\bar \lambda}{2}r^2 . 
	\end{equation*}
\end{enumerate} 

The following theorem shows that the qBnB(3) algorithm it indeed a valid qBnB algorithm, with third order convergence and, in the case of non-degenerate problems, eventual exactness .
	
	\begin{theorem}\label{thm:qBnB3}
		Let $(f,\C_0) $ be an unconstrained optimization problem. Assume that $U_0$ is an open set containing $\C_0$, such that   $f \in C^2(U_0) $ and \eqref{eq:unconstrained} holds. Let and $L_3$ be a Lipschitz constant for the Hessian function $U_0 \ni x \mapsto H(x)$. Let $\qlbr,\xr$ be the quasi-lower bound and sampling rules for the qBnB(3) algorithm. Then
		\begin{enumerate}
	\item $\qlbr$ is a valid quasi-lower bound rule.
	\item \textbf{Third order convergence.} For any cube $\C \subseteq \C_0$ we have
	\begin{equation}\label{eq:3correct}
	 f(\xr(\C))-\qlb(\C)\leq  3L_3r^3
	\end{equation}
	\item \textbf{Eventually exact.} If $f$ has a finite number of minimizers in $\C$ and the Hessian at each minimizer is strictly positive definite, then $(\xr,\qlbr) $ is eventually exact.
	\end{enumerate}
	\end{theorem}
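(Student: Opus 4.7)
The plan is to prove the three claims in sequence, relying throughout on Proposition~\ref{prop:reg}. For validity (Part 1), the contrapositive of the proposition gives that whenever either check \eqref{subeq:1} or \eqref{subeq:2} fails, the cube $\C$ cannot contain a global minimizer, so setting $\qlbr(\C)=\infty$ preserves the quasi-lower bound property. When both checks pass, the Newton iterates form a Cauchy sequence converging to some $\hat x_*$; if in addition $\C$ contains a minimizer, Proposition~\ref{prop:reg} together with inequality \eqref{eq:L3bound} yields $\qlbr(\C) = \hat f(\hat x_*) - \frac{\bar\lambda}{2}r^2 \leq f_*$, as required. The case $\qlbr(\C)=-\infty$ is automatic.

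For the third order convergence bound (Part 2), the inequality is vacuous whenever $\qlbr(\C) = \infty$; in the completing branch I would compute directly, using the definition \eqref{eq:hat_f} of $\hat f$,
\[
f(\xr(\C)) - \qlbr(\C) \;=\; f(\hat x_*) - \hat f(\hat x_*) + \tfrac{\bar\lambda}{2}r^2 \;=\; \tfrac{\bar\lambda}{2}\bigl(r^2 - \|\hat x_* - x_0\|^2\bigr) \;\leq\; \tfrac{\bar\lambda}{2}r^2.
\]
Since check \eqref{subeq:1} passed, $\lambdamin(x_\C) \geq -L_3 r$, so $\bar\lambda = \max\{0,\, 5L_3 r - \lambdamin(x_\C)\} \leq 6L_3 r$ and the right-hand side is at most $3L_3 r^3$.

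For eventual exactness (Part 3), fix a global minimizer $x_*$ with $\lambdamin(H(x_*)) > 0$ and set $\mu := \lambdamin(H(x_*))/2$. By continuity of $H$ there is a neighborhood of $x_*$ on which $\lambdamin(H(\cdot)) \geq \mu$. Choose $\delta > 0$ small enough that $\bar B_\delta(x_*)$ lies inside this neighborhood and inside $U_0$ with room for the $2r$ inflation required by Proposition~\ref{prop:reg}, and such that $5L_3\delta \leq \mu$. For any sub-cube $\C \subseteq B_\delta(x_*)$ containing $x_*$, the center $x_\C$ lies in $B_\delta(x_*)$, so $\lambdamin(x_\C) \geq \mu \geq 5L_3 r$, forcing $\bar\lambda = 0$ and $\hat f \equiv f$. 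By \eqref{eq:psd} the function $f$ is strictly convex on $B_{2r}(x_0)$, so its unique minimizer over $\bar B_r(x_0)$ is $x_*$; the Newton iterates therefore converge to $\hat x_* = x_*$, giving $\xr(\C) = x_*$ and $\qlbr(\C) = f(x_*) = f_*$, whence $f(\xr(\C)) - \qlbr(\C) = 0$.

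The main subtlety lies in Part 3 for cubes $\C \subseteq B_\delta(x_*)$ that do not contain $x_*$: the Newton iterates still converge to $x_*$, but $x_*$ then lies outside $\bar B_r(x_0)$, so check \eqref{subeq:2} must eventually fail and the algorithm returns $\qlbr(\C) = \infty$ rather than a finite value matching $f(\xr(\C))$. This is harmless for the induced finite termination of the enclosing BnB algorithm since such cubes are discarded as soon as any finite $\ub$ is available, but a literal reading of Definition~\ref{def:conv_order} requires either interpreting equality in the $\infty$ case by convention or restricting the claim to cubes that actually contain the minimizer.
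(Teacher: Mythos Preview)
Your proof is correct and follows essentially the same route as the paper: Parts~1 and~2 are identical to the paper's argument (the same chain $f(\hat x_*)-\hat f(\hat x_*)+\tfrac{\bar\lambda}{2}r^2=\tfrac{\bar\lambda}{2}(r^2-\|\hat x_*-x_0\|^2)\le 3L_3r^3$ via $\bar\lambda\le 6L_3r$), and Part~3 rests on the same key observation that $\bar\lambda=0$ once $\lambdamin(x_0)\ge 5L_3r$, though the paper reads the conclusion off directly from the middle equality in \eqref{eq:3error} rather than identifying $\hat x_*$ with $x_*$ explicitly as you do.

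The subtlety you flag at the end---that a cube $\C\subseteq B_\delta(x_*)$ \emph{not} containing $x_*$ may trigger the $\qlbr(\C)=\infty$ branch (since the Newton iterates head toward $x_*\notin\bar B_r(x_0)$ and eventually violate \eqref{subeq:2}), so that $f(\xr(\C))-\qlbr(\C)=-\infty\ne 0$ under a literal reading of Definition~\ref{def:conv_order}---is a genuine issue that the paper's own proof glosses over: it simply invokes the identity in \eqref{eq:3error} for all cubes $\C\subseteq B_\delta(x_*)$ without verifying that the Newton branch, rather than the $\infty$ branch, is reached. Your diagnosis and the observation that this is harmless for the intended consequence (finite termination) are both correct.
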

\begin{proof}
	\begin{enumerate}
\item If $\C\in \K$ is a cube which contains a minimizer, by Proposition~\ref{prop:reg} we have that $\qlbr(\C)\leq f_*$ and so the quasi lower bound is valid.
\item For every $\C \in \K$, if $\qlb(\C)=\infty$ then \eqref{eq:3correct} holds. Otherwise condition \eqref{subeq:1} holds and so
$$ \bar \lambda=\max\{0,5L_3r-\lambdamin(x_0)\}\stackrel{\eqref{subeq:1}}{\leq} \max\{0,6L_3r\}=6L_3r,$$
which implies that 
\begin{equation}\label{eq:3error}
f(\xr(\C))-\qlbr(\C)=f(\hat x_*)-(\hat f(\hat x_*)-\frac{\bar \lambda}{2}r^2)=\frac{\bar \lambda}{2}(r^2-\|\hat x_*- x_0\|^2)\leq  3L_3r^3.\end{equation}
\item If $f$ has a finite number of minimizers in $\C$ and the Hessian at each minimizer is strictly positive definite, then for $\delta>0$ small enough, for any minimizer $x_*$ and $x_0 \in \C_0$ with $\|x_0-x_*\|\leq \delta $ we have that $\lambdamin(x_0)\geq 5L_3 \delta  $. Thus if $
\C_r(x_0) \subseteq B_\delta(x_*) $ we have that $r \leq  \delta $ and so 
$$\bar \lambda=\max\{0,5L_3r-\lambdamin(x_0)  \} =  0$$
which implies that $ f(\xr(\C))-\qlb(\C)=0$ due to the second equality from the left in \eqref{eq:3error}. 
\end{enumerate}
\end{proof}

To summarize our discussion so far, we have defined qBnB(3) and showed that is it indeed a well defined qBnB algorithm, that it enjoys third order convergence, and for non-degenerate problems, eventual exactness. We conclude this section with some practical notes on implementation of the qBnB(3) algorithm.
\subsection{Implementation details}
\paragraph{Stopping criterion for Newton's algorithm} In practice we cannot of course find $\hat x_*$ with zero error. However since we are in the regime of  rapid convergence of Newton's  method, we can achieve very accurate approximations extremely quickly. In practice we allow the Newton algorithm an error of $\epsNewton<0.01 \epsilon $, where $\epsilon$ is the requested accuracy for the global optimization process. To ensure the error of Newton's algorithm is smaller than $\epsNewton$, we use the fact that the maximal eigenvalue of $\hat f$ in $\bar B_{2r}(x_0) $ is bounded by 
$$M=\lambdamax(x_0)+\bar \lambda+2L_3r . $$
By considering Taylor expansion of $\hat f$ around its minimizer $\hat x_*$ 
$$\hat f (\hat x_*) \geq \hat f(x_k)- M/2 \cdot r_k^2 .$$
Accordingly we stop the Newton iterations when
$$\epsNewton\geq M/2 \cdot r_k^2. $$ 
 Denoting the iteration in which the Newton algorithm was stopped  by $K$, the quasi lower bound and sampling rules are  computed as
$$\xr(\C)=x_K \text{ and } \qlbr(\C)= \hat f(x_K)- \frac{\bar \lambda}{2}r^2-\epsNewton.$$

\paragraph{Combining qBnB(2) and qBnB(3)} In practice we suggest to used a method which combines qBnB(2) and qBnB(3) methods, for two reasons:
(i) qBnB(3) requires a bound $L_3$ on the Hessian Lipschitz bound for  all points in $\bar B_{2r}(x_0) $ which strictly contains the cube $\C$ and may not be contained in $\C_0$. (ii) In practice (see Figure~\ref{fig:rastriginGraphs}) we find that the bounds computed by qBnB(3) are more efficient for small cubes, but less efficient for large cubes. Accordingly, we suggest to use  third order bounds only for cubes $\C$ with radius $r$ for which (i) the closed ball $\bar B_{2r}(x_\C) $ is contained in $\C_0$ and  (ii) the radius $r$ is small enough so that the qBnB(3) bounds are expected to be more accurate than the qBnB(2) bounds. This occurs when the uncertainty  estimates for qBnB(3) from \eqref{eq:3correct} are smaller than the corresponding uncertainty  estimates for qBnB(2) in \eqref{eq:uncertain2}, that is
$$\frac{L_2}{2}r^2 \geq 3L_3r^3.$$ 
We name this combined quasi-BnB algorithm qBnB(2+3).

\section{Experiments}\label{sec:results}
In this section we describe several experiments we conducted to compare the various BnB and qBnB algorithms discussed in the paper. Before describing the experiments themselves, we describe our method for producing valid Lipschitz bounds $L_s, s=1,2,3$. 

\subsection{Computing Lipschitz bounds}\label{sub:Lip}
We use a very simple protocol to compute  Lipschitz bounds $L_s, s=1,2,3$. For alternative methods for computing Lipschitz constants see e.g., \cite{cartis2015branching}. Firstly, for simplicity we compute a single Lipschitz bound which is valid over the whole cube $\C_0$. In general it is possible to compute a Lipschitz bound per cube, which could lead to better results.

 To compute a Lipschitz bound $L_1$ over $\C_0$, for a function $f \in \C^1(\C_0) $, we bound the function $\|\nabla f(x)\|$ on $\C_0$ using interval arithmetic (see e.g., \cite{dawood2011theories}). We use a similar strategy for $L_2$ and $L_3$ as well. To compute  $L_2$ over $\C_0$, for a function $f \in \C^2(\C_0) $, we use interval arithmetic to bound the Frobenius norm $\|H(x)\|_F $ over $\C_0$, which in turn upper bounds the operator norm of $H(x)$. For $s=3 $ we  consider the $3 $ dimensional tensor $T(x)$ with $d^3 $ entries which is composed of all derivatives of $f$ of order $3$. In \cite{fowkes2013branch} it is shown that valid bounds $L_3$ can be computing by bounding the Frobenius norm of the tensors $T(x)$, which is  defined as 
\begin{equation}\label{eq:tensor}
\|T(x)\|_F=\left[ \sum_{\alpha \in \{1,\ldots,d\}^3} [\partial^\alpha f(x)]^2 \right]^{1/2}  .
\end{equation}

\subsection{Rastrigin}
In our first experiment, we compare the performance of the three qBnB algorithms we suggested, qBnB(2), qBnB(3) and qBnB(2+3), with two popular BnB algorithms described in Subsection~\ref{subsec:BnB}: The canonical Lipschitz algorithm and the $\alpha$BB algorithm. The algorithms were all implemented in Matlab using the breadth first search technique described in Algorithm~\ref{alg}. The $\alpha $BB algorithm requires a bound on the minimal eigenvalue of the Hessian for all points in a given cube $\C$. We use the bound   $\lambdamin(x_\C)-L_3 r $, where $r$ is the radius of the cube. The Convex optimization sub-problems in $\alpha$BB were solved using Matlab's fmincon. The gradient of the functions was supplied and the sqp optimization algorithm was used as we found it faster than the other fmincon algorithms for the problems at hand. 
%\begin{wrapfigure}{r}{0.2\textwidth}
%	\includegraphics[width=0.2\textwidth]{heatmap}
%	\caption{heat map of Rastrigin's 2D function.}
%	\label{fig:rastrigin}
%\end{wrapfigure}

In this experiment we consider the problem of optimizing the function $f:\RR^d \to \RR $ defined for $\alpha \in \RR^d $  and $\delta,\theta \in \RR$ by 
 \begin{equation} \label{eq:Rastrigin}
  f(x)=\sum_{i=1}^d \alpha_i(1-\cos(\theta x_i))+\delta\|x\|^2 
 \end{equation}
over the parameter domain $[-a,a]^d $. 
For parameter choices of  
\begin{equation}\label{eq:rastrigin_params}
\theta=2\pi, \, a=5.12 , \,  \delta=1, \,  \alpha_i=10, \quad  \forall i\in \{1,\ldots,d\} 
\end{equation}
this function is the well-known Rastrigin function. When $\delta>0$ The function $f$ has a unique minimizer $x_*=0$. In this example we do not use interval arithmetic, but compute the bounds analytically, as it is straightforward to bound the Frobenius norm of the gradient, Hessian, or the tensor in   \eqref{eq:tensor} on all of $[-a,a]^d$ by
\begin{align*}
L_1=& \|\alpha\|_2|\theta|  +2|\delta|\sqrt{d}|a| \\
L_2=& \|\alpha\|_2|\theta|^2+2|\delta| \\
L_3=&\|\alpha\|_2 |\theta|^3 
\end{align*}

Figure~\ref{fig:rastriginGraphs} compares the five algorithms when applied to the Rastrigin function with the standard parameter values \eqref{eq:rastrigin_params} in the 2-dimensional case ($d=2$), with requested accuracy of $\epsilon=10^{-8}$ and a time limit of ten minutes. 

Figure~\ref{fig:rastriginGraphs}(a) plots for each algorithm the cumulative number of cubes visited  up to a given depth level of the search tree. We see that, as expected by the theoretical analysis of the clustering problem, the number of cubes visited by the Lipschitz algorithm increases dramatically as the search tree becomes deeper (corresponding to higher accuracy). In contrast, the remaining algorithms spend most of their iterations in the first stages, where the cubes are large and  the computed bounds are not effective. We note also that qBnB(3) takes longer to escape the initial stage of ineffective bounding, but qBnB(2+3) is comparable to qBnB(2) in the first stages. 

Figure~\ref{fig:rastriginGraphs}(b) plots for each algorithm the cumulative time  up to a given depth level of the search tree. As expected from Figure~\ref{fig:rastriginGraphs}(a), the Lipschitz algorithm is much slower than its competitors, and in fact timed out after ten minutes without achieving the requested accuracy of $\epsilon=10^{-8}$. We note that while the number of cubes visited by $\alpha$BB was comparable to the number of cubes visited by the qBnB algorithms, its timing is significantly slower due to the relatively high cost of computing the lower bound at each cube via solving a box constrained convex optimization problem.  

Figure~\ref{fig:rastriginGraphs}(c) illustrates the finite termination property enjoyed by the algorithms $\alpha$BB, qBnB(2) and qBnB(2+3), via the sudden drop to zero of the error computed by the algorithm.  Figure~\ref{fig:rastriginGraphs}(d) shows the timing of the various algorithms as a function of the dimension $d$ of the problem. As expected, all algorithms become significantly more expensive as the dimension is increased. In this figure we do not include the Lipschitz algorithm since it was not able to attain the required accuracy within the time limit for any one of the values of $d$ we checked.

\begin{figure}[t]
	\includegraphics[width=\textwidth]{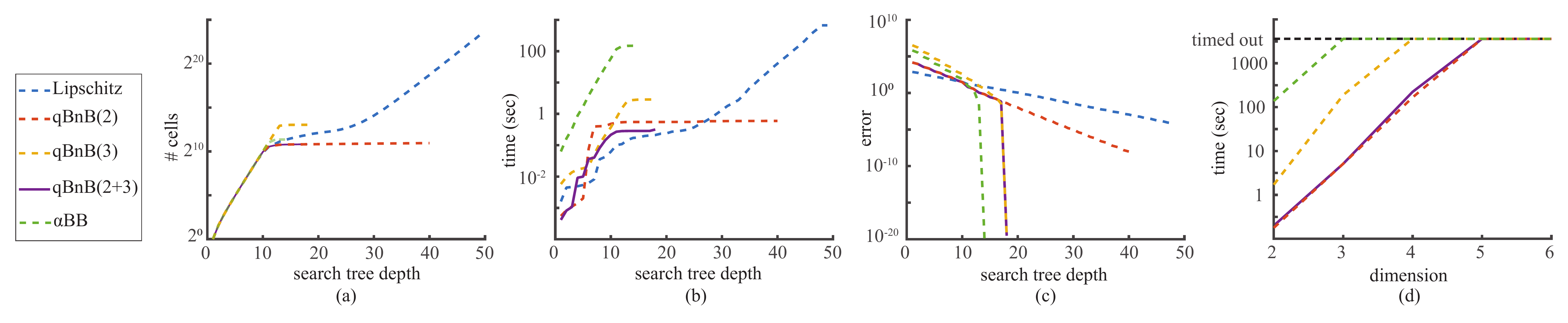}
	\caption{comparison of BnB and qBnB algorithms for the task of optimizing the Rastrigin function, in term  of (a) the cumulative number of cubes each algorithm visited in each depth level of the search tree (b) cumulative time vs. depth (c) the error $\ub-\lb$ computed by the algorithm at each depth level. In (d) we show examine the dependence of the runtime of the algorithms on the dimension $d$ of the problem. }
	\label{fig:rastriginGraphs}
\end{figure}

\subsection{Dixon-Szego}\label{sub:dixon}
Our results for the first experiment suggest that qBnB(2) and qBnB(2+3) are comparable in terms of timing, and outperform the remaining algorithms. In our second experiment we compared qBnB(2), qBnB(2+3) and $\alpha$BB on the nine Dixon-Szego test functions \cite{dixon1978global}. We compute the  bounds $L_2,L_3$  using interval arithmetic in Mathematica as described in Subsection~\ref{sub:Lip}. Table~\ref{table:dixon} shows the number of seconds the algorithms required to obtain global optimality up to a $10^{-8}$ error, per each optimization problem.  The algorithms were stopped if their runtime exceeded 1 hour, in which case the accuracy obtained after one hour is shown (denoted by `acc' in the table). The table shows that qBnB(2+3) and qBnB(2) both considerably outperformed $\alpha$BB in terms of efficiency. The two qBnB algorithms performed comparably on most of the problems. For the Goldstein-Price and Hartman3 functions qBnB(2+3) was significantly faster.

Table~\ref{table:dixon} also shows the dimensions of the different problems and the computed values of the bounds $L_i, i=2,3$.

 \begin{table}
 	\begin{center}
 		\begin{tabular*}{\textwidth}{l @{\extracolsep{\fill}} lllllll}
 			\hline
 			Problem     	&	d   & qBnB(2)   &qBnB(2+3)&	$\alpha$BB  & $L_2$ & $L_3$ \\
 			\hline
 			Branin	    	&   2   & 0.1 sec   &	0.1 sec  &	8 sec & 41 &	14.1 \\
 			Camelback   	&	2	& 0.3 sec	& 0.6 sec	 &12 sec  & 949  &	1100\\
 			Goldstein-Price	&   2	& 2608 sec	& 82 sec	 &2659 sec & $5.4 \times 10^8$ &	$8.7 \times 10^8$ \\
 			Shubert		    &   2   & 1 sec     &	 0.8 sec & 415 sec & $1.0\times 10^4$ &	$7.2 \times 10^4$ \\
 			Hartman3	    &   3	& 172 sec	& 118 sec	 & 1   (acc)& $2.2\times 10^4$ &	$1.2\times 10^6$\\
 			Shekel5	        &   4	& 0.1 (acc) &	0.1 (acc) &	1  (acc)& $4.0\times 10^3$ &	$2.4\times 10^5$ \\
 			Shekel7		    &   4   & 0.1 (acc) &	0.1 (acc)&	1  (acc)&$4.0\times 10^3$ &	$2.4\times 10^5 $\\
 			Shekel10	    &   4   &	0.1 (acc) &	0.9 (acc) &	6  (acc)&$4.0\times 10^3$ &	$2.4\times 10^5 $ \\
 			Hartman6	    & 6     &	0.03  (acc)& 	0.03 (acc) &	0.5 (acc)&$ 7.6 \times 10^3	$& $ 2.5 \times 10^5$  \\
 			\hline
 			
 		\end{tabular*}
 	\end{center}
 	\caption{Comparison of qBnB(2), qBnB(2+3), and $\alpha$BB on the Dixon-Szego test functions. The table shows the number of seconds the algorithms required to obtain global optimality up to a $10^{-8}$ error. The algorithms were stopped if their runtime exceeded 1 hour, in which case the accuracy obtained after one hour is shown. }
 	\label{table:dixon}
 \end{table}

\paragraph{Constrained optimization} In our final experiment we compared three algorithms: qBnB(2), constrained-qBnB(2) and the Lipschitz gradient algorithm described in Subsection~\ref{subsec:BnB}. We compared these algorithms on both constrained and unconstrained optimization problem, and the results are shown in Table~\ref{tab:constrained}.

We generated ten random unconstrained Rastrigin-like problems by setting $f$ as in \eqref{eq:Rastrigin}, where the  coefficient vectors $\alpha \in [0,1]^3 $ were generated randomly with uniform distribution, and setting $\delta, \theta,a$ as in \eqref{eq:rastrigin_params}, so that the unique minimizer of $f$ is zero, which is in the interior of $\C_0$ in this case. All three algorithms returned the correct solution, and the Lipschitz gradient algorithm was two times slower than the other two algorithms whose timing was similar, as shown in the right hand side of Table~\ref{tab:constrained} (timing is averaged over the ten experiments). This example indicates that there is not much to lose in using constrained-qBnB(2) even for problems where the minimizer is attained inside the cube.

We next generated ten random constrained problems in the same way as before, but changed the value of $\delta$ to $-1$ so that the minimum tended to be obtained on the boundary of the cube. As expected qBnB(2) did not find a solution within the required accuracy of $\epsilon=10^{-8}$. Constrained-qBnB(2) was on average more than seven times faster than the gradient-Lipschitz algorithm, and the difference in function value between the solutions obtained by the two methods was smaller than the error tolerance. These results are shown in the left hand side of Table~\ref{tab:constrained}. The error column represents the average deviation of a solver from the best solution attained per-problem.

\begin{table}
	\begin{center}
		\begin{tabular}{ccccc}
	%\begin{tabular*}{\textwidth}{l @{\extracolsep{\fill}} cccc}
			\hline
			&	iterations     & error    & iterations & error \\
			& (constrained)& (constrained)& (unconstrained)& (unconstrained)\\
			\hline
			gradient-Lipschitz & $9.9 \times 10^3$ & $8.6 \times 10^{-9}$ & $9.9 \times 10^3$ & $0$ \\
			qBnB(2) & $2.5 \times 10^3$  & $2.5 \times 10^{-4}$ & $5.1 \times 10^3$ &$ 0$ \\
			constrained-qBnB(2) & $1.3 \times 10^3 $ & $0$ & $5.1 \times 10^3$ & $0$\\
			\hline
			\end{tabular}
		%\end{tabular*}
	\end{center}
	\caption{Comparison of the  gradient-Lipschitz BnB algorithm with qBnB(2) and constrained-qBnB(2) for Rastrigin-like problems. For unconstrained problems qBnB(2) and constrained-qBnB(2) performed similarly, and were more effecient than the gradient-Lipschitz algorithm. For constrained optimization, as expected, qBnB(2) did not attain a solution within the required accuracy of $10^{-8}$. Constrained-qBnB(2) found the global minimizer faster than the gradient-Lipschitz algorithm. }
	\label{tab:constrained}
\end{table}

To conclude this section, the experiments  we conducted found that qBnB(2) and qBnB(2+3) outperform the remaining BnB and qBnB algorithms discussed in this paper. We also saw that while the timing of these two algorithms is often comparable, for certain problems qBnB(2+3) is significantly more efficient. Finally, we found that constrained-qBnB(2) is comparable to qBnB(2) when applied to unconstrained problems.

 The code used for running the experiments present in this section can be found in \cite{code}.  
 We note that the timing experiments presented here were all implemented in Matlab, and were not rigorously optimized for time efficiency. We expect all algorithms would equally benefit from implementation in compiled based programming languages such as C++.

\section{Conclusions and future work}
In this paper we introduced the notion of quasi-BnB in the context of general continuous optimization, and suggested several qBnB algorithms: The qBnB(2) algorithm is a very simple algorithm with second order convergence. In comparison with BnB algorithms, a major advantage is that is does not require computation of derivatives and so can provide a second order algorithm for derivative free optimization. Moreover, qBnB(2) is provably more efficient than the Lipschitz gradient algorithm. We next suggested how to generalize this algorithm to box constrained optimization, obtained the algorithm constrained-qBnB(2). Finally, we introduced qBnB(3) which is an algorithm with third order convergence and eventual exactness. Our experiments showed that qBnB(2+3), which combines qBnB(2) and qBnB(3), outperforms other competing BnB and qBnB algorithms we discussed in this paper. 

An interesting problem which we have not addressed in this paper is how to extend qBnB(3) to constrained optimization over a cube, and how to handle constraints which are more complicated than the box constraints discussed here.

\bibliographystyle{unsrt}
\bibliography{bib_smooth_quasi}

\appendix
\section{Convergence of qBnB}\label{app:convergence}
In this appendix we give a formal proof for the global convergence of qBnB algorithms. A qBnB algorithm is determined by a quasi-lower bound rule, a sampling rule, and a strategy for traversing  the  search tree. In our analysis here we will use  the simple breadth first search (BFS) algorithm described in Algorithm~\ref{alg}, which was the one we used for the experiments in Section~\ref{sec:results}.
\begin{algorithm}[H]
	\SetKwInOut{Input}{input}\SetKwInOut{Output}{output}
	\SetAlgoLined
	\Input{Required accuracy $\epsilon$, function $f$ and cube $\C_0$} 
	\Output{$\epsilon$-optimal solution $\xbest$}
	$g \leftarrow 0$ \;
	$q_0\leftarrow \qlbr(\C_0)$, $\xbest \leftarrow\xr(\C_0) $\;
	$\lb\leftarrow q_0 $, $\ub\leftarrow f(\xbest) $\;
	Put $(\C_0,q_0) $ into the list $L_g$ \;
	\While{$\ub-\lb>\epsilon$}{
		
		Initialize an empty list $L_{g+1}$\;
		\For{$(\C,q) \in L_g$}{
			\If{$q\leq \ub$ (cube cannot be eliminated) 	 \label{ln:eliminate}}
			{Subdivide $\C$ into two cubes $C_1,\C_2 $ along the longest edge of $\C$\;
				\For{ $i=1,2$}{  $q_i\leftarrow \qlbr(\C_i)$\;
					$ x_i\leftarrow \xr(\C_i) $ \;
					Add $(\C_i,q_i) $ to $L_{g+1}$\;
					\If{$f(x_i)<\ub$}{$\ub \leftarrow f(x_i)$ \;
						$\xbest \leftarrow x_i $\;}
				}
			}
		}
		$ g \leftarrow g+1 $ \label{g_update}\;
		$\lb=\min \{q| \, (\C,q) \in L_g  \} $ \label{lb:def} \;
	}
	\caption{Breadth first search qBnB algorithm} 
	\label{alg}
\end{algorithm}

The following Theorem shows that qBnB algorithms exhibit convergence to a global minimizer, providing that the difference between the upper bound and quasi lower bound computed per cube goes to zero with the diameter of the cube:
\begin{theorem}\label{thm:convergence}
	Let $\C_0$ be a cube in $\RR^d$, and $f $ a continuous function on $\C_0$. Let $\xr$ and $\qlbr$ be sampling and quasi-lower bounds rules. Fix $\epsilon>0$. Then
	\begin{enumerate}
		\item $\lb$ defined in Line~\ref{lb:def} is a lower bound for $f_*$.
		\item If there exists a continuous function $\psi:\RR_{\geq 0} \to \RR_{\geq 0} $ with $\psi(0)=0$ such that for all $\C \in \K$ with radius $\radius$,
		\begin{equation}\label{eq:decay} f(\xr(\C))-\qlbr(\C)\leq \psi(\radius) ,  \end{equation}
		Then  Algorithm~\ref{alg} terminates after a finite number of steps, and returns an $\epsilon$-optimal solution $\xbest$. 
	\end{enumerate}
\end{theorem}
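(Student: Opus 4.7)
The plan is to establish both parts using the invariant that at every generation $g$, the list $L_g$ contains at least one subcube $\C^*$ with $f_*(\C^*) = f_*$---a cube that still contains a global minimizer. Assuming this invariant, part~1 is immediate: by the defining property of a quasi-lower bound we have $\qlbr(\C^*) \leq f_*(\C^*) = f_*$, so $\lb = \min_{(\C,q)\in L_g} q \leq \qlbr(\C^*) \leq f_*$. To verify the invariant I would proceed by induction on $g$. The base case is trivial since $L_0 = \{(\C_0, q_0)\}$ and $\C_0$ contains every minimizer. For the inductive step, suppose $\C^* \in L_g$ contains a minimizer. The key point is that $\C^*$ is never eliminated: since $\ub = f(\xbest)$ for some $\xbest \in \C_0$, we always have $\ub \geq f_*$, hence $\qlbr(\C^*) \leq f_* \leq \ub$, so the test at Line~\ref{ln:eliminate} passes and $\C^*$ gets split into $\C^*_1 \cup \C^*_2$. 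At least one of the children contains the minimizer and is added to $L_{g+1}$, preserving the invariant.

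For part~2 the plan has two ingredients. The first is a uniform radius-decay statement: for every $\eta > 0$ there is a depth $G(\eta)$ such that every $\C \in L_g$ with $g \geq G(\eta)$ satisfies $r(\C) \leq \sqrt{d}\,\eta$. I would prove this by a potential-function argument. Letting $h_i(\C)$ denote the half-edge length of $\C$ along coordinate $i$, set $\Phi(\C) = \sum_{i=1}^d \max\{0,\, \lceil \log_2(h_i(\C)/\eta) \rceil\}$. Whenever $\Phi(\C) > 0$ the longest half-edge exceeds $\eta$, and the split-longest rule then halves exactly that edge, decreasing $\Phi$ by exactly $1$. Hence along any branch from the root, after $\Phi(\C_0)$ splits the potential hits zero, which forces every half-edge to be at most $\eta$, so $r \leq \sqrt{d}\,\eta$; taking $G(\eta) = \Phi(\C_0)$ gives the claim.

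The second ingredient connects the radius bound to the termination test. For every cube $\C$ that ever appears in the algorithm, the upper bound satisfies $\ub \leq f(\xr(\C))$ because $f(\xr(\C))$ is evaluated when $\C$ is created and $\ub$ is non-increasing thereafter. Combined with assumption~\eqref{eq:decay} this gives $\ub \leq \qlbr(\C) + \psi(r(\C))$ for every $\C \in L_g$. Applying this to a cube $\C^{**}$ achieving $\lb = \qlbr(\C^{**})$ yields $\ub - \lb \leq \psi(r(\C^{**}))$, and continuity of $\psi$ at $0$ combined with the radius decay produces a depth beyond which $\psi(r(\C^{**})) < \epsilon$, terminating the while-loop; since each generation creates at most twice as many cubes as the previous, total work up to termination is finite. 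Finally, $\ub - \lb \leq \epsilon$ together with $\lb \leq f_*$ from part~1 gives $f(\xbest) = \ub \leq f_* + \epsilon$, proving $\epsilon$-optimality. I expect the main subtlety to be the invariance argument in part~1: one must track carefully that $\ub \geq f_*$ throughout so that the quasi-lower bound condition, which only constrains $\qlbr$ on cubes containing minimizers, really does protect those cubes from elimination. The rest is bookkeeping plus a standard continuity argument.
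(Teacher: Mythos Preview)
Your proof is correct and follows the same overall strategy as the paper: an induction showing that some cube containing a minimizer survives in every $L_g$ (giving part~1), a radius-decay estimate for the longest-edge bisection, and then a bound on $\ub-\lb$ in terms of $\psi$ of the radius.

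Two minor differences are worth noting. For the radius decay the paper simply observes that bisecting the longest edge shrinks the radius by the fixed multiplicative factor $q=\bigl((d-3/4)/d\bigr)^{1/2}<1$, whereas you use a discrete potential $\Phi(\C)=\sum_i \max\{0,\lceil\log_2(h_i/\eta)\rceil\}$; both arguments are clean, yours gives a sharper depth count while the paper's avoids introducing any auxiliary object. For the gap estimate the paper proves $\ub-f_*<\epsilon/2$ (via the surviving minimizer cube) and $f_*-\lb<\epsilon/2$ (via an arbitrary cube) separately, while you go directly: picking the cube $\C^{**}$ that attains $\lb$ and using $\ub\le f(\xr(\C^{**}))$ yields $\ub-\lb\le\psi(r(\C^{**}))$ in one stroke. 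Your route is slightly more economical; the paper's two-sided split has the cosmetic bonus that it certifies $\xbest$ is actually $\epsilon/2$-optimal at the designated depth, but this is not needed for the stated theorem.
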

We note that if a qBnB algorithm has convergence order $\alpha$ for some positive $\alpha$, then \eqref{eq:decay} holds. 
\begin{proof}
	We first show that for all $g$ visited by the algorithm, once the construction of the list $L_g$ is terminated (line~\ref{g_update}), the list contains all global minimizers of $f$ in $\C_0$.  We prove the claim by induction on $g$. For $g=0$ each minimizer is contained in $\C_0\in L_0$. Now assume the claim holds for some $g \in \NN $.  Let $x_*$ be  a minimizer, then by assumption there exists $\C \in L_g$ such that  $x_* \in \C$. The cube $\C$ will not be eliminated by the condition in Line~\ref{ln:eliminate} of Algorithm~\ref{alg}, since $q=\qlbr(\C) $ is a lower bound for $f_*$, and so will be partitioned into two cubes $\C_1,\C_2$ which will be placed in $L_{g+1}$. One of these cubes will contain $x_*$.
	
	We now prove the first claim of the theorem. Since $f$ is continuous and $\C_0$ is compact, there exists a minimizer $x_*$ of $f$ in $\C_0$.  For all $g$ visited by the algorithm, the list $\L_g$ contains a cube $\C_*$ which contains $x_*$ by the previous claim. We know that $\qlb(\C_*) \leq f_*$ and so by the definition of $\lb$ in Line~\ref{lb:def} as the minimum of all quasi-lower bounds in the generation $g$ we have that $\lb\leq f_*$. 
	
	We now prove the second claim. Note that if $\C \in L_g$ and $\C_1$ is a subcube obtained from $\C$ by subdivision along the longest edge of $\C$, then the radius of $\C_1$ is smaller than the radius of $\C$ by a multiplicative factor of at least 
	$$q=\left(\frac{d-3/4}{d}\right)^{1/2}<1 $$
	There exists $\delta>0$ such that $\psi(t)<\epsilon/2 $ for all $t, 0\leq t\leq \delta$. 
	For $g$ large enough $ \radius(\C_0) q^g<\delta $  and so all cubes in $L_g$ will have radius smaller than $\delta$. Let $\xbest$ be the point attained by the algorithm at the end of the $g$-th generation, and let  $\C_*$ be a cube in $L_g$ which contains a minimizer  of $f$. Then since $\qlb(\C_*)$ is a lower bound for $f_*$, and $f(\xbest)\leq f(\xr(\C_*)) $,
	\begin{equation}\label{eq:upper}
	\ub-f_*=f(\xbest)-f_* \leq f(\xr(\C_*))-f_*\leq f(\xr(\C_*))-\qlbr(\C_*)\leq \psi(\radius)<\epsilon/2 ,\end{equation}
	where $\radius$ is the radius of the cube $\C_*$.
	On the other hand for any $\C \in L_g$ with radius $r$, 
	$$f_*-\qlbr(\C)\leq f(\xr(\C))-\qlbr(\C)\leq \psi(\radius)<\epsilon/2$$
	and so taking the minimum over all $\C \in L_g$ we have
	\begin{equation}\label{eq:lower} 
	f_*-\lb<\epsilon/2 
	\end{equation}
	The combination of \eqref{eq:upper} and \eqref{eq:lower} shows that the algorithm terminates at the end of the $g$-th generation (if not beforehand) as $\ub-\lb<\epsilon $. When the stopping condition is met $\xbest$ is $\epsilon/2$-optimal by \eqref{eq:upper}.
\end{proof}

\section{Second order quasi-BnB for conditionally Lipschitz differentiable functions}\label{app:conditional_second_order}
In this appendix we review the quasi-BnB algorithm suggested in \cite{dym2019linearly}. This paper studies optimization problems of the form
\begin{equation}\label{eq:E}
\min_{x \in \C_0, y \in Y} E(x,y)
\end{equation}
where $E$ is continuous, $\C_0 \subseteq \RR^d$ is a cube, and  $Y$ is some compact subset of $\RR^n$. We make the following assumptions
\begin{enumerate}
	\item For fixed $x$, the function $y \mapsto E(x,y) $ can be minimized in polynomial time. 
	\item For fixed $y$, the function $x \mapsto E(x,y) $ is in $C^1(\C_0) $, with Lipschitz continuous gradients bounded by a Lipschitz constant $L_2$ which is independent of $y \in Y$. 
\end{enumerate}

There are several well known algorithmic problems with have this structure. Such problems are typically solved using an alternating minimization algorithm. Prominent examples being Expectation Maximization (EM) for parameter estimation \cite{moon1996expectation}, $k$-means for clustering \cite{jain2010data} and Iterative Closest Point (ICP) for rigid alignment \cite{besl1992method}. Our aim is to solve these optimization problems globally. To so we define the function 
$$f(x)=\min_{y \in Y} E(x,y) ,$$
so that the original optimization problem \eqref{eq:E} is equivalent to optimizing the continuous function $f$ over the cube $\C$. This formulation is particularly useful for problems like the rigid alignment problem, which was the focus of \cite{dym2019linearly},  where $d$ is small.

We now show how to define a quasi-lower bound rule for minimizing $f$. We claim that if we have for each fixed $y$ a quasi lower bound rule $\qlbr_y(\cdot) $ for minimizing the function $x \mapsto E(x,y) $, then
\begin{equation}\label{eq:qlb_accum} \qlbr(\C)=\inf_{y \in Y} \qlbr_y(\C) \end{equation}
is a valid quasi-lower bound rule for minimizing $f$ over $\C_0$. This is because 
for every cube $\C$ which contains a minimizer $x_*$ of $f$, there exists some $y_*$ such that  $(x_*,y_*)$ is  a minimizer of $E$, and so $x \mapsto E(x,y_*) $ satisfies
$$f(x_*)= E(x_*,y_*)\geq \qlbr_{y_*}(\C) \geq \min_{y \in Y} \qlbr_{y}(\C).$$
If we know that for any fixed $y \in \Y$ the optimization of $E(\cdot,y)$ over $\C_0$ is an unconstrained optimization problem, then using the quasi lower bound rule of qBnB(2) with  the uniform  Lipschitz gradient constant $L_2$ we obtain for cubes $\C \in \K$ with center $x_\C$ and radius $\radius$,
$$\qlbr(\C)=\min_{y \in Y} E(x_\C,y)-\frac{L_2}{2}\radius^2=F(x_\C)-\frac{L_2}{2}\radius^2 . $$
Note that in the general case  we can get an analogous quasi lower bound by using the quasi-lower bound rule of constrained-qBnB(2). 

We note that \eqref{eq:qlb_accum} can be used to derive other (quasi)-lower bounds from a family of (quasi)-lower bound rules $\qlbr_y, y\in Y$, and indeed this have been suggested using Lipschitz first order bounds \cite{yang2015go,pfeuffer2012discrete,hartley2009global}. However, bounding $\qlb_y(\C) $ uniformly in $y$  for second order lower bounds discussed in Subsection~\ref{subsec:BnB} seems to be a formidable task. Indeed to the best of our knowledge the method suggested in \cite{dym2019linearly} is the only method with second order convergence for this kind of problem. 

\newpage

%\nd{code for figures on tables}
%"read rastrigin results" for Rastrigin with varying d
%
%"solve rastrigin" for d=2 rastrigin figures
%
%"get cosine exp bounds" for heat map of rastrigin
%
%nine test functions were run using "solve multiple test fun" and results can be read using "read summary results"
\end{document}